\definecolor {processblue}{cmyk}{0.96,0,0,0}
\tikzset{    vertex/.style={circle,draw,minimum size=1.5em},    edge/.style={->,> = latex'}}
\newtheorem{theorem}{Theorem}[section]
\newtheorem{conj}[theorem]{Conjecture}
\newtheorem{theo}[theorem]{Theorem}
\newtheorem{lem}[theorem]{Lemma}
\newtheorem{pro}[theorem]{Proposition}
\newtheorem{rem}[theorem]{Remark}
\newtheorem{exa}[theorem]{Example}
\newtheorem{que}[theorem]{Question}
\newtheorem{Definition}[theorem]{Definition}
\newtheorem*{Definition*}{Definition}
\def\qed{\hfill \ifhmode\unskip\nobreak\fi\quad\ifmmode\Box\else$\Box$\fi\\ }
\begin{document}

\title[Oriented $S^1$-manifold with 3 fixed points]{Circle actions on oriented manifolds with 3 fixed points}
\author{Donghoon Jang}
\address{Department of Mathematics, Pusan National University, Pusan 46241, Korea}
\email{donghoonjang@pusan.ac.kr}

\begin{abstract}
Let the circle group act on a compact oriented manifold $M$ with a non-empty discrete fixed point set. Then the dimension of $M$ is even. If $M$ has one fixed point, $M$ is the point. In any even dimension, such a manifold $M$ with two fixed points exists, a rotation of an even dimensional sphere. Suppose that $M$ has three fixed points. Then the dimension of $M$ is a multiple of 4. Under the assumption that each isotropy submanifold is orientable, we show that if $\dim M=8$, then the weights at the fixed points agree with those of an action on the quaternionic projective space $\mathbb{HP}^2$, and show that there is no such 12-dimensional manifold $M$.
\end{abstract}

\maketitle
\section{Introduction}

The study of rotations of geometric objects, such as rotations of spheres, is a classical topic in geometry and topology. A rotation of an even-dimensional sphere $S^{2n}$ has two fixed points, and there are many examples of circle actions on compact manifolds that have finite fixed point sets. If the circle group acts on a manifold with a non-empty finite fixed point set, the dimension of the manifold is necessarily even. This is because the tangent space of each isolated fixed point decomposes into real $2$-dimensional $S^1$-representations; also see the discussion after Theorem \ref{t11}. Therefore, it is a natural question to ask, given $2n$ and $k$, whether there exists a $2n$-dimensional $S^1$-manifold $M$ with $k$ fixed points. 

If we consider non-orientable manifolds, the answer is known; any pair $(2n,k)$ is possible, and we can see this as follows. The real projective space $\mathbb{RP}^{2n}$ admits an action with one fixed point, as the quotient of a rotation on $S^{2n}$ with 2 fixed points under the antipodal map. Thus, for any pair $(2n,k)$, we can produce a $2n$-dimensional compact $S^1$-manifold with $k$ fixed points, for instance by taking an equivariant connected sum along free orbits of $k$-copies of the action on $\mathbb{RP}^{2n}$. On the other hand, for oriented manifolds, a complete answer is not known, and it is the focus of this paper. By choosing an orientation, we may assume that any orientable manifold is oriented, and results of this paper apply to orientable manifolds; for simplicity, to avoid choosing an orientation all the time, we will always assume that any orientable manifold is oriented. Throughout this paper, any manifold is assumed to be smooth and have no boundary, and any group action on a manifold is assumed to be smooth.

\begin{que} \label{q11} Suppose that $M$ is a $2n$-dimensional compact oriented manifold and the circle group $S^1$ acts on $M$ with $k$ fixed points, $k \in \mathbb{N}$. Then what are the possible pairs $(2n,k)$?
\end{que}

If we approach Question \ref{q11} in terms of the number $k$ of fixed points, the following are well-known for a pair $(2n,k)$.
\begin{enumerate}
\item If $k=1$, then $2n=0$. That is, if there is exactly one fixed point, $M$ must be the point itself. For this fact, for instance, see Lemma \ref{l23}.
\item If $k=2$, any $2n$ is possible, and a rotation of an even-dimensional sphere $S^{2n}$ provides an example in any even dimension.
\item More generally, if $k$ is even, any $2n$ is possible; a disjoint union of copies of rotations of $S^{2n}$ is an example. To get a connected manifold, take a connected sum along free orbits of rotations of $S^{2n}$ for $n>1$; if $n=1$, $k$ must be 2 and it is the 2-sphere.
\item If $k$ is odd, then $2n$ must be a multiple of 4; see Lemma \ref{c27}.
\item If $k=3$, then there are examples for $2n=4$, $8$, and $16$; the complex projective $\mathbb{CP}^2$, the quaternionic projective space $\mathbb{HP}^2$, and the octonionic projective space $\mathbb{OP}^2$ each admit a circle action with 3 fixed points. Their real dimensions are 4, 8, and 16, respectively.
\end{enumerate}

For the actions on $\mathbb{HP}^2$ and $\mathbb{OP}^2$, see Example \ref{e214} and \cite{GW}, respectively.

We approach Question \ref{q11} from low dimensions. If $2n \leq 10$, the answer to Question \ref{q11} is also well-known and is as in Table \ref{table}.

\begin{table}[]
\begin{tabular}{|l|l|l|}
\hline
2n & Possible $k$   & Examples                                \\ \hline
0  & Any $k \geq 1$ & $k$ points                             \\ \hline
2  & Any even $k$   & Copies of $S^2$                        \\ \hline
4  & Any $k \geq 2$ & Copies of $S^4$ and $\mathbb{CP}^2$ \\ \hline
6  & Any even $k$   & Copies of $S^6$                        \\ \hline
8  & Any $k \geq 2$ & Copies of $S^8$ and $\mathbb{HP}^2$ \\ \hline
10 & Any even $k$   & Copies of $S^{10}$                     \\ \hline
\end{tabular}
\caption{Possible $(2n,k)$ in low dimensions} \label{table}
\end{table}

Therefore, both from small numbers of fixed points and from low dimensions, the first case for which an answer is not known is $(2n,k)=(12,3)$.

\begin{que}\label{q12}
Does there exist a circle action on a 12-dimensional compact oriented manifold with exactly 3 fixed points?
\end{que}

The purpose of this paper is to answer Question \ref{q12} under an assumption.

\begin{theo} \label{t11}
There does not exist a circle action on a 12-dimensional compact oriented manifold with exactly 3 fixed points, provided that the isotropy submanifolds are orientable.
\end{theo}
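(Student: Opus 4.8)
The plan is to convert the existence of such an action into a system of numerical constraints on the weights at the three fixed points, and then to show that this system has no solution in nonzero integers once $\dim_{\mathbb R} M = 12$. Label the fixed points $p_1,p_2,p_3$. At each $p_m$ the tangent space splits into six real $2$-dimensional weight spaces, so the local data is a multiset of six nonzero integers $\{w_{m,1},\dots,w_{m,6}\}$; the orientation of $M$ determines these signs up to an even number of simultaneous flips, equivalently it fixes the sign of $W_m:=\prod_{i=1}^{6}w_{m,i}$. Two standard ingredients drive everything. First, the Atiyah--Bott--Berline--Vergne localization theorem: integrating an equivariant class of degree below $\dim M$ yields $0$, so $\sum_{m}\frac{f(w_{m,\bullet})}{W_m}=0$ for every symmetric function $f$ of low enough degree (in particular $\sum_m W_m^{-1}=0$), while the top-degree classes compute Pontryagin numbers. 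Second, the rigidity of the equivariant signature: the character
\[
S(y)=\sum_{m=1}^{3}\;\prod_{i=1}^{6}\frac{y^{\,w_{m,i}}+1}{y^{\,w_{m,i}}-1}
\]
is independent of $y$ and equals the integer $\operatorname{sign}(M)$. Since $M$ is only assumed oriented, I would work with the signature operator, not the Dirac operator, so that no spin hypothesis is needed.

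The first step is bookkeeping combined with the ``moment-type'' relations. Expanding $S(y)$ near $y=1$, each factor has a simple pole with leading term $2/(w_{m,i}(y-1))$, so the principal part of order $(y-1)^{-j}$ must vanish for $j=1,\dots,6$. These six vanishing conditions are precisely the low-degree localization relations above (the top one being $\sum_m W_m^{-1}=0$), and together they express the elementary symmetric functions of the weights at the three fixed points in terms of one another.

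The real engine is the second step: cancellation of the poles of $S(y)$ at the \emph{other} roots of unity. A pole of $S(y)$ at a primitive $d$-th root of unity arises from the weights divisible by $d$, and rigidity forces these contributions to cancel across the three fixed points. This links the multisets $\{w_{1,i}\}$, $\{w_{2,i}\}$, $\{w_{3,i}\}$ very tightly: each weight occurring at one fixed point must be ``answered'' at another, exactly as in the $\mathbb{CP}^2$ pattern $\{a,b\}$, $\{-a,b-a\}$, $\{-b,a-b\}$ and in the analogous patterns realized by $\mathbb{HP}^2$ and $\mathbb{OP}^2$. With only three fixed points there is very little room for such cancellation, so I expect the weight system to be forced into a rigid ``projective-plane'' shape. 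The final step is to show that in dimension $12$ no such shape closes up over $\mathbb Z$: the same self-referential constraints that are satisfiable in real dimensions $4,8,16$ (reflecting $\mathbb C,\mathbb H,\mathbb O$) admit no integer solution when there are six weights per fixed point.

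The hard part is precisely this last elimination. The obstruction in dimension $12$ is invisible to any single cheap invariant---the parity theorem only demands $4\mid 2n$, which $12$ satisfies, and $\chi(M)=3$ is likewise compatible---so the contradiction has to be extracted from the full interlocking family of localization and rigidity identities. Concretely, I would either bound the weights (using that only three fixed points are available to cancel each root-of-unity pole) down to a finite search, or, more satisfyingly, isolate a single Pontryagin-number or signature identity whose two sides carry incompatible divisibility or parity in dimension $12$. Finding that clean identity, rather than grinding through the case analysis, is where the difficulty lies, and it is also where the conceptual gap---the absence of a six-dimensional division-algebra structure interpolating between $\mathbb{HP}^2$ and $\mathbb{OP}^2$---should make itself felt.
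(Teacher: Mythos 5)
Your proposal is a strategy outline, not a proof: it correctly assembles the standard toolkit (ABBV localization, rigidity of the equivariant signature, pole cancellation at roots of unity), but it stops exactly where the actual work begins. You acknowledge this yourself (``finding that clean identity\dots is where the difficulty lies''), and neither of the two routes you sketch for the final elimination --- a finite search after bounding the weights, or a single divisibility/parity obstruction --- is carried out or shown to be viable. As it stands there is no contradiction derived, so the statement is not proved.

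For comparison, the paper closes the gap with two concrete inputs that your plan does not reach. First, a Vandermonde argument applied to $\int_M (P_1)^l=0$ for $l=0,1,2$ (legitimate because $\dim M=12\ge 12$ and there are only $3$ fixed points, with $\operatorname{sign}(M)=\pm1$) forces the restriction of the \emph{first} equivariant Pontryagin class to be identical at all three fixed points; in the notation $\Sigma_{p_1}=\{\pm,a_i,b_i\}$, $\Sigma_{p_2}=\{\pm,a_i,c_i\}$, $\Sigma_{p_3}=\{\mp,b_i,c_i\}$ this gives $\sum a_i^2=\sum b_i^2=\sum c_i^2$. This is sharper than the generic observation that ``top-degree classes compute Pontryagin numbers.'' Second --- and this is the step your root-of-unity pole analysis would not obviously deliver --- the paper studies the honest isotropy submanifold $F\subset M^{\mathbb{Z}_{b_3}}$ for the \emph{largest} weight $b_3$: a parity argument (if all weights on a component are equal, the fixed points split evenly by sign) shows $F$ contains exactly two of the three fixed points, and the fact that the normal bundle $NF$ is an oriented $\mathbb{Z}_{b_3}$-bundle over the connected $F$ forces the $\mathbb{Z}_{b_3}$-representations at those two points to be isomorphic, yielding relations of the form $a_{i_1}+c_{j_1}=b_3$, $a_{i_2}+c_{j_2}=b_3$, $a_{i_3}=c_{j_3}$ together with sign bookkeeping. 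Substituting these into $\sum a_i^2=\sum c_i^2$ gives $a_{i_1}+a_{i_2}=b_3$ and then $c_1\in\{a_{i_1},a_{i_2}\}$, contradicting $c_1<a_1$ (which comes from the signature identity $a_1=b_1+c_1$). If you want to complete your proposal, you would need to supply these two ingredients --- or genuine substitutes for them --- explicitly.
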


\begin{rem}
Let the circle act on an orientable manifold $M$. Let $w$ be a positive integer. Consider a cyclic subgroup $\mathbb{Z}_w$ of $S^1$ and its action on $M$. If $w$ is odd, then the isotropy submanifold $M^{\mathbb{Z}_w}$ is orientable. Therefore, the assumption in Theorem \ref{t11} is that the isotropy submanifold $M^{\mathbb{Z}_w}$ is orientable for each even positive integer $w$. In fact, we only need that any connected component of $M^{\mathbb{Z}_w}$ containing an $S^1$-fixed point is orientable for each even positive integer $w$; with this assumption the proof of Theorem \ref{t11} holds.
\end{rem}

Consider a circle action on a compact oriented manifold $M$. Suppose that $p$ is an isolated fixed point. The tangent space $T_pM$ to $M$ at $p$ has a decomposition
\begin{center}
$T_pM=L_{p,1} \oplus \cdots \oplus L_{p,n}$ for some $n$,
\end{center}
where each $L_{p,i}$ is a real 2-dimensional irreducible $S^1$-equivariant real vector space. Such a vector space $L_{p,i}$ is isomorphic to a complex 1-dimensional $S^1$-equivariant complex space on which $S^1$ acts by multiplication by $g^{w_{p,i}}$ for some non-zero integer $w_{p,i}$ for all $g \in S^1$. In particular, the dimension of $M$ is $2n$ and even. For each $L_{p,i}$, we choose an orientation of $L_{p,i}$ so that $w_{p,i}$ is positive, and call the positive integers $w_{p,i}$ the \textbf{weights} at $p$. The tangent space $T_pM$ at $p$ has two orientations; one coming from the orientation on $M$ and the other coming from the orientation on the representation space $L_{p,1} \oplus \cdots \oplus L_{p,n}$. Let $\epsilon(p)=+1$ if the two orientations agree and $\epsilon(p)=-1$ otherwise, and call it the \textbf{sign} of $p$. Let $\{w_{p,1},\cdots,w_{p,n}\}$ be the unordered multiset of the weights at $p$. We define the \textbf{fixed point data} of $p$ by $\Sigma_p:=(\epsilon(p) ,\{w_{p,1}, \cdots, w_{p,n}\})$. Inside $\Sigma_p$, for the sign of $p$ we will omit 1 and only write its sign, not to be confused with weights. To simplify our notation, for the fixed point data of $p$, we shall use the notation $\{\epsilon(p),w_{p,1},\cdots,w_{p,n}\}$ instead of $(\epsilon(p) ,\{w_{p,1}, \cdots, w_{p,n}\})$. We define the \textbf{fixed point data} of $M$ to be the multiset of fixed point datum of the fixed points and denote it by $\Sigma_M$, that is, $\Sigma_M=\cup_{p \in M^{S^1}} \{\Sigma_p\}$. The study of these numerical invariants of fixed points is a key tool in the proof of Theorem \ref{t11}.

As mentioned above, if $M$ has exactly 1 fixed point $p$, $M$ has to be the fixed point itself, that is, $M=\{p\}$. If $M$ has exactly two fixed points, then the fixed point data of $M$ agrees with that of a rotation of $S^{2n}$ \cite{K3}, see Theorem \ref{t213}. 

Next, suppose that $M$ has exactly three fixed points. Then $\dim M$ is a multiple of 4 as mentioned above. If $\dim M=4$, the fixed point data of $M$ agrees with that of an action on $\mathbb{CP}^2$, see \cite{J3}. 
If $\dim M=8$ and each isotropy submanifold is orientable, the fixed point data of $M$ is the same as the data of the action on $\mathbb{HP}^2$.

\begin{theo} \label{t12}
Let the circle act on an 8-dimensional compact oriented manifold $M$ with 3 fixed points. Suppose that each isotropy submanifold is orientable. Then the fixed point data of $M$ is
\begin{center}
$\{\pm, a+b, a+c, a, a+b+c\}$, $\{\pm, a+b, a+c, b, c\}$, and $\{\mp, a, a+b+c, b, c\}$
\end{center}
for some positive integers $a$, $b$, and $c$.
\end{theo}

In Theorem \ref{t12}, the signs mean that first two fixed points have the same sign and the other has the opposite sign. As noted earlier, Example \ref{e214} illustrates an action on $\mathbb{HP}^2$ with the fixed point data described above.

In \cite{Ku}, Kustarev obtained a result similar to Theorem \ref{t12}. The result of the paper says that two types of fixed point data occur for an 8-dimensional compact oriented $S^1$-manifold with 3 fixed points; in \cite{Ku} it is also assumed that each isotropy submanifold is orientable. On the other hand, as one sees from Theorem \ref{t12} and Example \ref{e214}, only one type of fixed point data occurs, and the two types of fixed point data in \cite{Ku} are in fact equivalent. Our proof is simple; see our proof in Section \ref{s5}.

We discuss recent results on circle actions on orientable manifolds with 3 fixed points. In \cite{W}, Wiemeler showed that if the circle group acts on a closed orientable manifold with 3 fixed points, then the dimension of the manifold is of the form $4 \cdot 2^a$ or $8 \cdot (2^a+2^b)$ with $a, b \geq 0$ and $a \neq b$. For this, Wiemeler utilizes the equivariant cohomology, the Hirzebruch signature theorem, and the Pontryagin classes and numbers, and formulates some arguments on the signature and the Pontryagin classes in \cite{FS, S} for rational projective planes. In \cite{DW}, Dong and Wang improved this result by showing that if the circle group acts on a closed orientable manifold $M$ with 3 fixed points, then $\dim M \in \{4, 8, 16\}$.

\subsection{Idea of proof of Theorem \ref{t11}}

The idea of the proof of Theorem \ref{t11} is to study the fixed point data of an $S^1$-manifold. To elucidate, let the circle act on a compact oriented manifold $M$ with a discrete fixed point set such that its isotropy submanifolds are orientable. There exists a multigraph that contains information on the fixed point data of $M$; a vertex corresponds to a fixed point, and the label of an edge is a weight of its vertex (see Definitions \ref{d28} and \ref{d29} and Proposition \ref{p210}). In particular, if $M$ has exactly 3 fixed points, the multigraph says that the fixed point data of $M$ has a particular form (Proposition \ref{p41}). Proposition \ref{p42} and Lemmas \ref{l61}, \ref{l62}, and \ref{l66} provide relationships that the weights at the fixed points must satisfy for $M$ with 3 fixed points. We prove Proposition \ref{p42} by applying the Atiyah-Singer index theorem (Theorem \ref{t22}) to the setting in Proposition \ref{p41}. We use the equivariant Pontryagin classes in the Atiyah-Bott-Berline-Vergne (ABBV) localization theorem (Theorem \ref{t21}) to obtain Lemma \ref{l34}, which yields Lemma \ref{l61}. Lemma \ref{l66} is obtained from the isomorphism of the weights at the fixed points lying in an isotropy submanifold. We prove Theorem \ref{t11} by showing that if $\dim M=12$ and $M$ has exactly 3 fixed points, the fixed point of $M$ cannot satisfy all of the aforementioned relationships between the weights at the fixed points, implying that such an $M$ does not exist.

\subsection{Organization}

This paper is organized as follows.
Let $M$ be a compact oriented manifold equipped with a circle action having a non-empty discrete fixed point set. In Section \ref{s2}, we review background. In Section \ref{multigraph}, we discuss a multigraph encoding the fixed point data of $M$. In Section \ref{s3}, we prove a few properties on Pontryagin classes and Pontryagin numbers. Suppose that $M$ has exactly 3 fixed points and each isotropy submanifold is orientable. In Section \ref{s4}, we prove preliminary results that the fixed point data of $M$ satisfy. In Section \ref{s5}, we prove Theorem \ref{t12} that if $\dim M=8$, the fixed point data of $M$ agrees with that of an action on $\mathbb{HP}^2$ in Example \ref{e214}. In Section \ref{s6}, we further investigate properties that the fixed point data of $M$ satisfies if $\dim M=12$. In Section \ref{s7} we prove Theorem \ref{t11} that there does not exist a 12-dimensional compact oriented $S^1$-manifold with 3 fixed points. In Section \ref{s8}, we discuss a relation between the dimension of a compact oriented $S^1$-manifold and the number of fixed points when it is odd, and compare with results for other types of $S^1$-manifolds.

\section{Background} \label{s2}

In this section, we review necessary background.
We review the ABBV localization theorem (Theorem \ref{t21}) and the Atiyah-Singer index theorem (Theorem \ref{t22}) and their consequences. In Section \ref{s3}, we use Theorem \ref{t21} to prove relations between the first equivariant Pontryagin classes at the fixed points (Lemma \ref{l34}). In Section \ref{s4}, we apply Theorem \ref{t22} to obtain relations between the weights at the fixed points (Proposition \ref{p42}).

Let a Lie group $G$ act on a manifold $M$. The fixed point set, denoted by $M^G$, is the set of points in $M$ that are fixed by all elements in $G$, that is, $M^G=\{m \in M \, | \, g \cdot m=m, \forall g \in G\}$.

Let the circle act on a manifold $M$. Let $w$ be a positive integer. The cyclic group of order $w$, which we will denote $\mathbb{Z}_w$, is a subgroup of $S^1$ and therefore acts on $M$. The set $M^{\mathbb{Z}_w}$ of points in $M$ that are fixed by the $\mathbb{Z}_w$-action is a union of smaller dimensional compact submanifolds of $M$. These smaller dimensional
submanifolds are called \textbf{isotropy submanifolds}.

Let the circle act on a manifold $M$. The \textbf{equivariant cohomology} of $M$ is $\displaystyle H_{S^{1}}^{*}(M)=H^{*}(M \times_{S^{1}} S^{\infty})$. Suppose that $M$ is oriented and compact. The projection map $\pi : M \times_{S^{1}} S^{\infty} \rightarrow \mathbb{CP}^{\infty}$ induces a natural push-forward map
\begin{center}
$\displaystyle \pi_{*} : H_{S^{1}}^{i}(M;\mathbb{Z}) \rightarrow H^{i-\dim M}(\mathbb{CP}^{\infty};\mathbb{Z})$
\end{center}
for $i \in \mathbb{Z}$. This map is often denoted by $\int_{M}$ and is given by integration over the fiber.

\begin{theo} \emph{[ABBV localization theorem]} \cite{AB} \label{t21}
Let the circle act on a compact oriented manifold $M$. Let $ \alpha \in H_{S^{1}}^{\ast}(M;\mathbb{Q})$. As an element of $\mathbb{Q}(t)$,
\begin{center}
$\displaystyle \int_{M} \alpha = \sum_{F \subset M^{S^{1}}} \int_{F} \frac{\alpha|_{F}}{e_{S^{1}}(N_{F})}$,
\end{center}
where the sum is taken over all fixed components, and $e_{S^{1}}(N_{F})$ is the equivariant Euler class of the normal bundle to $F$.
\end{theo}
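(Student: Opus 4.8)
The plan is to deduce the formula from two structural facts about rational $S^1$-equivariant cohomology: the Borel--Atiyah--Bott localization isomorphism, and the functoriality of the push-forward map $\int_M=\pi_{M*}$. Throughout I would work in the localized ring, tensoring every equivariant cohomology group with the field $\mathbb{Q}(t)$, where $t$ generates $H^*_{S^1}(\mathrm{pt};\mathbb{Q})=H^*(\mathbb{CP}^\infty;\mathbb{Q})=\mathbb{Q}[t]$. For each fixed component $F$ write $i_F\colon F\hookrightarrow M$ for the inclusion, $i_F^*$ for restriction, and $(i_F)_*$ for the Gysin push-forward. The strategy is to prove the cohomological identity $\alpha=\sum_F (i_F)_*\big(i_F^*\alpha / e_{S^1}(N_F)\big)$ in the localized ring and then push forward to a point.

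First I would establish the localization isomorphism: the total restriction map
$$i^*\colon H^*_{S^1}(M)\otimes\mathbb{Q}(t)\longrightarrow H^*_{S^1}(M^{S^1})\otimes\mathbb{Q}(t)=\bigoplus_F H^*_{S^1}(F)\otimes\mathbb{Q}(t)$$
is an isomorphism. This rests on the fact that on the complement $M\setminus M^{S^1}$ the action is locally free, so the rational equivariant cohomology there is a torsion $\mathbb{Q}[t]$-module and hence vanishes after inverting $t$; feeding this into the long exact sequence of the pair $(M,M\setminus M^{S^1})$, together with the equivariant Thom isomorphism identifying the relative groups with those of the normal bundles, yields the claim. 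This is the step I expect to be the main obstacle, since it is where the compactness of $M$ and the genuinely topological content (rather than formal bookkeeping) enters.

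Next I would record the two compatibility formulas I need. The self-intersection formula gives that $i_F^*(i_F)_*$ is multiplication by the equivariant Euler class $e_{S^1}(N_F)$ on $H^*_{S^1}(F)$, while $i_{F'}^*(i_F)_*=0$ for $F'\neq F$ because the images have disjoint support. Crucially, $e_{S^1}(N_F)$ becomes a unit after inverting $t$: writing the weights of the $S^1$-action on the fibres of $N_F$ as nonzero integers $w_1,\dots,w_m$, the class $e_{S^1}(N_F)$ has leading term $\big(\prod_j w_j\big)t^{m}$ in the $\mathbb{Q}[t]$-grading, a nonzero multiple of a power of $t$, hence invertible in $H^*_{S^1}(F)\otimes\mathbb{Q}(t)$. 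This is exactly what makes the quotient $i_F^*\alpha / e_{S^1}(N_F)$ meaningful.

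Finally I would assemble these. Since $i^*$ is an isomorphism it is in particular injective, so to verify
$$\alpha=\sum_F (i_F)_*\!\left(\frac{i_F^*\alpha}{e_{S^1}(N_F)}\right)$$
it suffices to apply each $i_{F'}^*$ and compare: by the two compatibility formulas the right-hand side restricts to $e_{S^1}(N_{F'})\cdot\big(i_{F'}^*\alpha / e_{S^1}(N_{F'})\big)=i_{F'}^*\alpha$, matching the left-hand side, so the identity holds in $H^*_{S^1}(M)\otimes\mathbb{Q}(t)$. Applying the push-forward $\pi_{M*}=\int_M$ and using functoriality $\pi_{M*}\circ(i_F)_*=\pi_{F*}=\int_F$ (which holds because $\pi_M\circ i_F=\pi_F$) gives
$$\int_M\alpha=\sum_F\int_F\frac{\alpha|_F}{e_{S^1}(N_F)},$$
which is the asserted formula.
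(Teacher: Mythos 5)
The paper offers no internal proof of this statement—it is quoted as background directly from Atiyah--Bott \cite{AB}—and your sketch is precisely the standard argument from that source: localize over $\mathbb{Q}(t)$, use that the equivariant cohomology of the locally free part is $t$-torsion to get injectivity of restriction to $M^{S^1}$, invert $e_{S^1}(N_F)$, and conclude via the self-intersection formula and functoriality $\pi_{M*}\circ(i_F)_*=\pi_{F*}$. The argument is correct; the only two points left tacit are that invertibility of $e_{S^1}(N_F)$ uses not just the nonzero leading coefficient $\bigl(\prod_j w_j\bigr)t^{m}$ but also nilpotence of the remaining terms (which holds since each compact fixed component $F$ has finite-dimensional ordinary cohomology), and that each $F$ is orientable (the normal bundle admits a complex structure from the weight decomposition), so that $\int_F$ is defined.
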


Let $M$ be a compact oriented manifold. The \textbf{$L$-genus} is the genus beloging to the power series $\frac{\sqrt{z}}{\tanh \sqrt{z}}$. The \textbf{signature} of $M$, denoted by $\textrm{sign}(M)$, is the analytical index of the signature operator on $M$. The Atiyah-Singer index theorem states that the analytical index of the signature operator on $M$ is equal to the topological index of the operator, and is equal to the $L$-genus of $M$. Let the circle group acts on $M$. Then we define the equivariant index of the operator \cite{AS}. The equivariant index of the operator is rigid under the action, and is equal to the signature of $M$. If the fixed point set is discrete, the Atiyah-Singer index theorem implies the following formula; recall that for each isolated fixed point $p$, $\epsilon(p)$ is the sign of $p$ and $w_{p,1},\cdots,w_{p,n}$ are the weights at $p$ (see the definitions in the introduction).

\begin{theo} \emph{[Atiyah-Singer index theorem]} \cite{AS} \label{t22}
Let the circle act on a $2n$-dimensional compact oriented manifold $M$ with a discrete fixed point set. The signature of $M$ is
\begin{center}
$\displaystyle{\textrm{sign}(M) = \sum_{p \in M^{S^1}} \epsilon(p) \prod_{i=1}^{n} \frac{1+t^{w_{p,i}}}{1-t^{w_{p,i}}}}$,
\end{center}
for all indeterminates $t$.
\end{theo}

From Theorem \ref{t22} we obtain simple bounds on the signature of $M$; this simple result was proved in \cite{J4} and we shall review the proof.

\begin{lem} \label{l23} \cite{J4}
Let the circle act on a compact oriented manifold $M$ with $k$ fixed points, where $\dim M>0$ and $0<k<\infty$. Then $k \geq 2$ and the signature of $M$ satisfies $|\textrm{sign}(M)| \leq k-2$.
\end{lem}

\begin{proof}
Let $\dim M=2n>0$. Suppose that $\epsilon(p)=+1$ for all $p \in M^{S^1}$. By Theorem \ref{t22}, 
\begin{center}
$\displaystyle \textrm{sign}(M) = \sum_{p \in M^{S^1}} \epsilon(p) \prod_{i=1}^{n} \frac{1+t^{w_{p,i}}}{1-t^{w_{p,i}}}=\sum_{p \in M^{S^1}} \prod_{i=1}^n [(1+t^{w_{p,i}}) (\sum_{j=0}^{\infty} t^{j w_{p,i}})]=\sum_{p \in M^{S^1}} \prod_{i=1}^n(1+2\sum_{j=1}^{\infty}t^{jw_{p,i}})$
\end{center}
for all indeterminate $t$, which cannot hold since the signature of $M$ is a constant. Therefore there exists at least one fixed point $q$ with $\epsilon(q)=-1$. Similarly, there exists at least one fixed point $q'$ with $\epsilon(q')=+1$. Hence $k \geq 2$. Taking $t=0$ in Theorem \ref{t22}, $\textrm{sign}(M)=\sum_{p \in M^{S^1}} \epsilon(p)$. \end{proof}

Manipulating the the Atiyah-Singer index formula (Theorem \ref{t22}) and comparing the coefficients of $t^w$-terms where $w$ is the smallest weight, the following lemma holds.

\begin{lem} \label{l25} \cite{J3} Let the circle act on a $2n$-dimensional compact oriented manifold $M$ with a discrete fixed point set. Let $w=\min\{w_{p,i} \, | \, 1 \leq i \leq n, p \in M^{S^1}\}$. Then
\begin{center}
$\displaystyle \sum_{p \in M^{S^1}, \epsilon(p)=+1} N_p(w)=\sum_{p \in M^{S^1}, \epsilon(p)=-1} N_p(w)$,
\end{center}
where $N_p(w)=|\{i \, : \, w_{p,i}=w, 1 \leq i \leq n, p \in M^{S^1}\}|$ is the number of times $w$ occurs as a weight at $p$. \end{lem}

For a circle action on a compact manifold, the Euler number of the manifold is equal to the sum of Euler numbers of its fixed components.

\begin{theo} \label{t26} \cite{Kob}
Let the circle act on a compact manifold $M$. Then
\begin{center}
$\displaystyle \chi(M)=\sum_{F \subset M^{S^1}} \chi(F)$,
\end{center}
where $\chi(F)$ denote the Euler number of $F$.
\end{theo}

Let $M$ be a compact oriented manifold. Then $\chi(M)=\sum_{i=0}^{\dim M} (-1)^i b_i$, where $b_i$ denotes the $i$-th Betti number of $M$. Let the circle group act on $M$ with a discrete fixed point set. Since the Euler number of a point is 1, Theorem \ref{t26} and Poincar\'e duality imply the following known fact.

\begin{lem} \label{c27} 
Let the circle act on a compact oriented manifold. If the number of fixed points is odd, the dimension of the manifold is divisible by four. \end{lem}

\section{Multigraphs describing oriented $S^1$-manifolds}\label{multigraph}

In this section, we discuss a certain type of multigraph that contains information on the fixed point data of a compact oriented $S^1$-manifold with a discrete fixed point set, under the assumption that each isotropy submanifold is orientable; a vertex of the multigraph corresponds to a fixed point of the manifold, and each edge is labeled by a positive integer, which corresponds to a weight of its fixed point (vertex) (see Definitions \ref{d28} and \ref{d29} and Proposition \ref{p210}). This was established in \cite[Proposition 2.6]{J3}, but it requires the assumption that each isotropy submanifold is orientable. Otherwise, with the assumption, the proof of \cite[Proposition 2.6]{J3} goes smoothly. We shall review the proof in this section. First, we introduce the notion of a signed labeled multigraph.

\begin{Definition} \label{d28} A \textbf{multigraph} $\Gamma$ is an ordered pair $\Gamma=(V,E)$ where $V$ is a set of vertices and $E$ is a multiset of unordered pairs of vertices, called \textbf{edges}. A multigraph is called \textbf{signed} if every vertex $v$ is assigned a number $+1$ or $-1$, called the \textbf{sign} of $v$. A multigraph is called \textbf{labeled}, if every edge $e$ is labeled by a positive integer $w(e)$, called the \textbf{label}, or the \textbf{weight} of the edge. That is, a multigraph is labeled if there exists a map from $E$ to the set of positive integers. Let $\Gamma$ be a labeled multigraph. The \textbf{weights} at a vertex $v$ consist of the labels (weights) $w(e)$ of edges $e$ at $v$. A multigraph $\Gamma$ is called \textbf{$n$-regular}, if every vertex has $n$-edges. \end{Definition}

Next, we introduce a signed labeled multigraph that contains the fixed point data of a compact oriented $S^1$-manifold with a discrete fixed point set.

\begin{Definition} \label{d29} Let the circle act on a compact oriented manifold $M$ with a discrete fixed point set. We say that a (signed labeled) multigraph $\Gamma=(V,E)$ \textbf{describes} $M$ if the following hold.
\begin{enumerate}
\item There is a bijection between the vertex set $V$ of $\Gamma$ and the fixed point set $M^{S^1}$ of $M$.
\item The sign of any vertex is equal to the sign of its corresponding fixed point.
\item For any vertex $v$, the multiset of the labels of the edges of $v$ is equal to the multiset of the weights of its corresponding fixed point.
\item For every edge $e \in E$, the vertices of $e$ lie in the same connected component of $M^{\mathbb{Z}_{w(e)}}$.
\end{enumerate}
\end{Definition}

In particular, if a multigraph $\Gamma$ describes $M$, then $\Gamma$ is $n$-regular, where $\dim M=2n$.

\begin{rem}
We will refer to the weights of a vertex as the weights of the corresponding element of $M^{S^1}$.
\end{rem}

\begin{rem}
Let $M$ be a compact oriented manifold endowed with an $S^1$-action with a discrete fixed point set. Suppose that each isotropy submanifold of $M$ containing an $S^1$-fixed point is orientable. In this case, the proof of \cite[Proposition 2.6]{J3} shows that we can construct a multigraph describing $M$. On the other hand, a multigraph describing $M$ need not be unique.
\end{rem}

We discuss how \cite[Proposition 2.6]{J3} associated a signed labeled multigraph to a compact oriented $S^1$-manifold with a discrete fixed point, whose isotropy submanifolds are orientable.

Let the circle act on a compact oriented manifold $M$ with a discrete fixed point set. Suppose that each isotropy submanifold is orientable. First, to each fixed point we assign a vertex. Let $w$ be a weight at a fixed point $p$ and let $F$ be a component of $M^{\mathbb{Z}_w}$ that contains $p$. By assumption, $F$ is orientable; choose an orientation of $F$. 
The circle acting on $M$ also acts on $F$ as a restriction, and the smallest weight of this action is $w$. Applying Lemma \ref{l25} to this action on $F$, the multiplicity of weight $w$ at fixed points with sign $+1$ (for this action on $F$) is equal to the multiplicity of $w$ at fixed points with sign $-1$ (for this action on $F$). Thus, we can draw edges of label $w$ between fixed points $q \in F^{S^1}=F \cap M^{S^1}$ with different signs for this $S^1$-action on $F$. This is the idea of the proof of \cite[Proposition 2.6]{J3}.

\begin{pro} \label{p210} \cite[Proposition 2.6]{J3}
Let the circle act on a $2n$-dimensional compact oriented manifold $M$ with a discrete fixed point set. Suppose that each isotropy submanifold is orientable. Then there exists a (signed labeled) multigraph describing $M$ that has no self-loops. \end{pro}

\begin{rem}
For torus actions on manifolds with discrete fixed point sets, certain multigraphs associated to manifolds have proved useful in the study of fixed point problems. Such a multigraph encodes the data on the fixed point set of a manifold: a vertex of a multigraph corresponds to a fixed point, and the label of an edge corresponds to a weight at a fixed point. If a given manifold $M$ admits an almost complex structure preserved by the action (for example, $M$ is a symplectic manifold), we associate a directed labeled multigraph \cite{GS}, \cite{JT}, \cite{T}; also see \cite{J6} for an extension to torus actions and \cite{J5} for an extension to unitary manifolds. 
\end{rem}

\section{Pontryagin classes}\label{s3}

In this section, for a compact oriented $S^1$-manifold with a discrete fixed point set, we prove a few properties in terms of Pontryagin classes. We will use these to prove Theorem \ref{t11}.

For an $S^1$-equivariant oriented (complex) vector bundle $E \to M$, the equivariant Euler class and the equivariant Pontryagin class (the equivariant Chern class) of $E$ are the Euler class and the Pontryagin class (the Chern class) of the bundle $E \times_{S^1} S^\infty \to M \times_{S^1} S^\infty$. If the circle acts on a manifold $M$, the action acts on the tangent bundle $TM$ of $M$, and we consider the $S^1$-equivariant vector bundle $TM \to M$.

Suppose the circle group acts on a $2n$-dimensional compact oriented manifold $M$ and the fixed point set is non-empty and finite. Let $p$ be an isolated fixed point. The tangent space $T_pM$ to $M$ at $p$ has a decomposition
\begin{center}
$T_pM=L_{p,1} \oplus \cdots \oplus L_{p,n}$
\end{center}
where the circle acts on each $L_{p,i}$ with weight $w_{p,i}>0$. Each $L_{p,i}$ can be given a complex structure. The total equivariant Chern class of $T_pM$ is $\prod_{i=1}^n (1 \pm w_{p,i} x)$, where each sign depends on the choice of a complex structure on $L_{p,i}$. Here, $x$ is a degree 2 generator of $H^*(\mathbb{CP}^{\infty})$. Moreover, the normal bundle to $p$ is the tangent space $T_pM$ to $p$, and thus the equivariant Euler class of the normal bundle to $p$ is
\begin{center}
$e_{S^1}(N_p)=e_{S^1}(T_pM)=c_n^{S^1}(T_pM)=\epsilon(p) \prod_{j=1}^n (w_{p,j} x)$.
\end{center}
Here, $c_i^{S^1}$ denotes the $i$-th equivariant Chern class of $M$.
The Chern classes $c_i$ determine the Pontryagin classes $P_i$ by
\begin{center}
$1-P_1+P_2-+\cdots\pm P_n=(1-c_1+c_2-+\cdots \pm c_n)(1+c_1+c_2+\cdots+c_n)$,
\end{center}
see \cite{MS}. This shows the following.

\begin{pro} \label{p31}
Let the circle act on a $2n$-dimensional compact oriented manifold $M$ with a discrete fixed point set. Then the restriction of the total equivariant Pontryagin class $P$ of $M$ to a fixed point $p$ is given by
\begin{center}
$\displaystyle P(p)=\prod_{i=1}^n (1+w_{p,i}^2 x^2)$,
\end{center}
where $x$ is a generator of $H^*(\mathbb{CP}^{\infty})$.
\end{pro}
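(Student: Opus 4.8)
The plan is to restrict the equivariant tangent bundle $TM$ to the isolated fixed point $p$, where it becomes simply the $S^1$-representation $T_pM$, and then to compute the equivariant Pontryagin class of this representation directly from its weight decomposition. All equivariant characteristic classes of $T_pM$ live in $H^{*}_{S^1}(\mathrm{pt}) = H^{*}(\mathbb{CP}^{\infty}) = \mathbb{Z}[x]$, so the entire computation takes place in the polynomial ring $\mathbb{Z}[x]$, and the proof amounts to organizing the ingredients already assembled in the discussion preceding the statement.

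First I would use the splitting $T_pM = L_{p,1} \oplus \cdots \oplus L_{p,n}$ and choose a complex structure on each real $2$-plane $L_{p,i}$. This endows $T_pM$ with an equivariant complex structure, so that its total equivariant Chern class equals $\prod_{i=1}^n (1 \pm w_{p,i} x)$, where the sign in the $i$-th factor records whether the chosen complex structure agrees with the one making the weight positive. I would then feed this into the universal Chern--Pontryagin identity $1 - P_1 + P_2 - \cdots \pm P_n = (1 - c_1 + c_2 - \cdots)(1 + c_1 + c_2 + \cdots)$. Under the substitution, the two factors on the right become $\prod_i (1 \mp w_{p,i} x)$ and $\prod_i (1 \pm w_{p,i} x)$, whose product collapses factor by factor to $\prod_{i=1}^n (1 - w_{p,i}^2 x^2)$. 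Matching homogeneous pieces then identifies $P_k(p)$ with the $k$-th elementary symmetric polynomial in $w_{p,1}^2 x^2, \dots, w_{p,n}^2 x^2$, which yields $P(p) = \prod_{i=1}^n (1 + w_{p,i}^2 x^2)$.

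There is no genuine obstacle here; the argument is a short direct computation, and the one point deserving attention is precisely the conceptual content of the statement. The individual signs $\pm w_{p,i} x$ appearing in the equivariant Chern class depend on a noncanonical choice of complex structure on each $L_{p,i}$, yet the final Pontryagin class does not, because each conjugate pair multiplies to $(1 - w_{p,i} x)(1 + w_{p,i} x) = 1 - w_{p,i}^2 x^2$, an expression insensitive to the sign. Hence the formula is well defined independently of all auxiliary choices, and verifying this sign-independence is the only subtlety to record.
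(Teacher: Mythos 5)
Your argument is correct and coincides with the paper's own derivation: the author likewise splits $T_pM$ into the oriented $2$-planes $L_{p,i}$, chooses a complex structure on each to get the total equivariant Chern class $\prod_{i=1}^n(1\pm w_{p,i}x)$, and applies the identity $1-P_1+P_2-\cdots=(1-c_1+c_2-\cdots)(1+c_1+c_2+\cdots)$ to obtain $P(p)=\prod_{i=1}^n(1+w_{p,i}^2x^2)$, noting the independence of the choices. Nothing is missing; your write-up matches the paper step for step.
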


Let the circle on a $2n$-dimensional compact oriented manifold $M$ with $k$ fixed points, $0 < k < \infty$. Fix a positive integer $m$. Denote by $B=\{B_1, \cdots, B_s\}$ the set $\{P_m(p) \, | \, p \in M^{S^1}\}$, where $P_m(p)$ is the $m$-th equivariant Pontryagin class $P_m$ of $M$ at $p$. Let 
$$\displaystyle A_i = \sum_{P_m(p)=B_i} \epsilon(p) \frac{1}{\prod_{j=1}^n w_{p,j}}$$
for $1 \leq i \leq s$. In the proof of Theorem \ref{t11} we will use the fact that if $\dim M \geq 12$ and there are exactly 3 fixed points, the restrictions of the first equivariant Pontryagin class at the fixed points are the same. This will follow from the lemmas below.

\begin{lem} \label{l32}
Let the circle on a $2n$-dimensional compact oriented manifold $M$ with $k$ fixed points, $0 < k < \infty$. Fix a positive integer $m$. 
Suppose that there exists some $i$ such that $A_i \neq 0$. Then $\dim M < 4km$.
\end{lem}

\begin{proof} Assume on the contrary that $\dim M \geq 4km$. Note that $s \leq k$. By Theorem \ref{t21}, for $0 \leq l < s$,
\begin{center}
$\displaystyle 0=\int_M (P_m)^l = \sum_{p \in M^{S^1}} \epsilon(p) \frac{(P_m (p))^l}{\prod_{j=1}^n w_{p,j}} = \sum_{i=1}^s \sum_{p \in M^{S^1}, P_m(p)=B_i} \epsilon(p) \frac{(P_m(p))^l}{\prod_{j=1}^n w_{p,j}}$ 

$\displaystyle = \sum_{i=1}^s \sum_{p \in M^{S^1}, P_m(p)=B_i} \epsilon(p) \frac{B_i^l}{\prod_{j=1}^n w_{p,j}}$ 

$\displaystyle = \sum_{i=1}^s B_i^l \sum_{p \in M^{S^1}, P_m(p)=B_i} \epsilon(p) \frac{1}{\prod_{j=1}^n w_{p,j}}=\sum_{i=1}^s B_i^l A_i$.
\end{center}

Let $C$ be a matrix $(C)_{ij} = B_i^{j-1}$, $1 \leq i , j \leq s$. Then we have a homogeneous system of linear equations
\begin{center}
$C \cdot A =0$,
\end{center}
where $A=(A_1,\cdots,A_s)^t$. Because $B_1$, $\cdots$, $B_s$ are all distinct, $C$ is a Vandermonde matrix. Thus $\det(C) \neq 0$. Therefore, $A_i=0$ for all $i$, but this contradicts the assumption. \end{proof}

As an application of Lemma \ref{l32}, we obtain the following lemma.

\begin{lem} \label{l33}
Let the circle on a compact oriented manifold $M$ with $k$ fixed points, $0 < k < \infty$. Suppose that $\textrm{sign}(M)=\pm(k-2)$. If $\dim M \geq 4km$, then the restrictions of the equivariant $m$-th Pontryagin class $P_m$ of $M$ at all fixed points are the same. That is, $P_m(p_1)=P_m(p_2)$ for any $p_1, p_2 \in M^{S^1}$.
\end{lem}

\begin{proof}
First, suppose that $\textrm{sign}(M)=k-2$. Since $\mathrm{sign}(M)=\sum_{p \in M^{S^1}} \epsilon(p)$, $k-1$ fixed points have sign $+1$ and one fixed point has sign $-1$. Let $B$ and $A_i$ be as above. By Lemma \ref{l32}, $A_i=0$ for all $i$. Because $A_i = \sum_{P_m(p)=B_i} \epsilon(p) \frac{1}{\prod_{j=1}^n w_{p,j}}$, for $A_i$ to vanish, there must be at least two fixed points $p_1$ and $p_2$ with different signs and $P_m(p_1)=B_i=P_m(p_2)$. Since there is only one fixed point with sign $-1$, this implies that $s=1$ and hence $B=\{B_1\}$. The case where $\textrm{sign}(M)=-k+2$ is similar.
\end{proof}

If $\dim M \geq 12m$ and there are exactly 3 fixed points, the above lemma implies the following.

\begin{lem} \label{l34}
Let the circle on a compact oriented manifold $M$ with exactly three fixed points. If $\dim M \geq 12m$, the restrictions of the equivariant $m$-th Pontryagin class $P_m$ of $M$ at all fixed points are the same. That is, $P_m(p_1)=P_m(p_2)$ for any $p_1, p_2 \in M^{S^1}$.
\end{lem}

\begin{proof}
Since the action has 3 fixed points, Lemma \ref{l23} and the fact that $\mathrm{sign}(M)=\sum_{p \in M^{S^1}} \epsilon(p)$ imply that $\textrm{sign}(M)=\pm 1$. Therefore, this lemma follows from Lemma \ref{l33}. \end{proof}

\section{Preliminaries: 3 fixed points} \label{s4}

From now on, we focus on the case of three fixed points. In this section, we analyze a multigraph describing a compact oriented $S^1$-manifold with 3 fixed points, and use this to derive relationships between the weights at different fixed points.

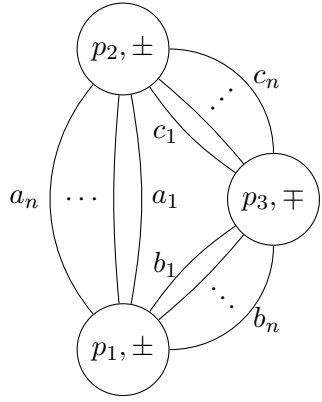
\begin{figure}
\centering
\begin{tikzpicture}[state/.style ={circle, draw}]
\node[vertex] (b) at (3, 0) {$p_{3},\mp$};
\node[vertex] (c) at (0, 3) {$p_{2},\pm$};
\node[vertex] (a) at (0, -3) {$p_{1},\pm$};
\path (a) [bend left =10] edge node[left] {$b_1$} (b);
\path (a) [bend right =5] edge node[right, sloped, rotate=270] {$\cdots$} (b);
\path (a) [bend right =35] edge node[right] {$b_n$} (b);
\path (b) [bend left =10] edge node[left] {$c_1$} (c);
\path (b) [bend right =5] edge node[right, sloped, rotate=90] {$\cdots$} (c);
\path (b) [bend right =35] edge node[right] {$c_n$} (c);
\path (a) [bend right =10] edge node[right] {$a_1$} (c);
\path (a) [bend left =5] edge node[left] {$\cdots$} (c);
\path (a) [bend left =30] edge node[left] {$a_n$} (c);
\end{tikzpicture}
\caption{Multigraph describing $M$}\label{fig}
\end{figure}

\begin{pro} \label{p41}
Let the circle act on a compact oriented manifold $M$ with three fixed points. Suppose that each isotropy submanifold is orientable. Then Figure \ref{fig} describes $M$.
In particular, the fixed point data of $M$ is
\begin{center}
$\Sigma_{p_1}=\{\pm, a_1,\cdots,a_n,b_1,\cdots,b_n\}$

$\Sigma_{p_2}=\{\pm, a_1,\cdots,a_n,c_1,\cdots,c_n\}$

$\Sigma_{p_3}=\{\mp, b_1,\cdots,b_n,c_1,\cdots,c_n\}$
\end{center}
for some positive integers $a_i$, $b_i$, and $c_i$, $1 \leq i \leq n$, such that $a_1 \leq \cdots \leq a_n$, $b_1 \leq \cdots \leq b_n$, and $c_1 \leq \cdots \leq c_n$, where $p_1$, $p_2$, and $p_3$ are the three fixed points.
\end{pro}

\begin{proof}
By Proposition \ref{p210}, there exists a signed labeled multigraph $\Gamma$ describing $M$ that has no self-loops. By Lemma \ref{c27}, $\dim M=4n$ for some positive integer $n$. Let $p_1$, $p_2$, and $p_3$ denote the fixed points. Each fixed point has $2n$ weights. Suppose that there are $m$-edges between $p_1$ and $p_2$. Since $\Gamma$ has no self-loops, there are $2n-m$ edges between $p_1$ and $p_3$ and hence there are $2n-m=m$ edges between $p_2$ and $p_3$. Therefore, $m=n$.

Since $\textrm{sign}(M)=\epsilon(p_1)+\epsilon(p_2)+\epsilon(p_3)$, with Lemma \ref{l23} it follows that $\textrm{sign}(M)=\pm 1$ and two fixed points have the same sign, say $p_1$ and $p_2$. This completes the proof. \end{proof}

\begin{rem}
In Figure \ref{fig}, there is an asymmetry between $a_i$'s and $b_i$'s, $c_i$'s. An edge with label $a_i$ has vertices with same signs, while an edge with label $b_i$ or $c_i$ has vertices with different signs.
\end{rem}

From Proposition \ref{p41} we derive relations on the weights at the fixed points that many weights are equal to sums of other weights.

\begin{pro} \label{p42}
In Proposition \ref{p41}, the following two multisets are equal.
\begin{center}
$\displaystyle A=\{\sum_{i=1}^n (d_i a_i+e_i b_i+f_i c_i) \, | \, 0 \leq d_i,e_i,f_i \leq 1,\sum_{i=1}^n d_i \equiv 1 \mod 2, \sum_{i=1}^n e_i \equiv 0 \mod 2, \sum_{i=1}^n f_i \equiv 0 \mod 2 \}$

$\displaystyle B=\{\sum_{i=1}^n (d_i a_i+e_i b_i+f_i c_i) \, | \, 0 \leq d_i,e_i,f_i \leq 1, \sum_{i=1}^n d_i \equiv 0 \mod 2, \sum_{i=1}^n e_i \equiv 1 \mod 2, \sum_{i=1}^n f_i \equiv 1 \mod 2 \}$
\end{center}
Here, $d_i$, $e_i$, and $f_i$ are integers. Moreover, 
\begin{itemize}
\item $a_1=b_1+c_1$; and 
\item $a_2=b_1+c_2$ or $a_2=b_2+c_1$. 
\end{itemize}
\end{pro}

\begin{proof}
By Theorem \ref{t22}, the signature of $M$ satisfies
\begin{center}
$\displaystyle \textrm{sign}(M)=\pm 1=\pm \prod_{i=1}^n \frac{ (1+t^{a_i})(1+t^{b_i})}{(1-t^{a_i})(1-t^{b_i})} \pm \prod_{i=1}^n \frac{ (1+t^{a_i})(1+t^{c_i})}{(1-t^{a_i})(1-t^{c_i})} \mp \prod_{i=1}^n \frac{(1+t^{b_i})(1+t^{c_i})}{(1-t^{b_i})(1-t^{c_i})}$
\end{center}
and hence
\begin{center}
$\displaystyle 1=\prod_{i=1}^n \frac{ (1+t^{a_i})(1+t^{b_i})}{(1-t^{a_i})(1-t^{b_i})}+\prod_{i=1}^n \frac{ (1+t^{a_i})(1+t^{c_i})}{(1-t^{a_i})(1-t^{c_i})}-\prod_{i=1}^n \frac{(1+t^{b_i})(1+t^{c_i})}{(1-t^{b_i})(1-t^{c_i})}$
\end{center}
for all indeterminates $t$. We multiply the above equation by the least common multiple of the denominators $\prod_{i=1}^n [(1-t^{a_i})(1-t^{b_i})(1-t^{c_i})]$ to have
\begin{center}
$\displaystyle \prod_{i=1}^n [(1-t^{a_i})(1-t^{b_i})(1-t^{c_i})]=\prod_{i=1}^n[ (1+t^{a_i})(1+t^{b_i})(1-t^{c_i})]+\prod_{i=1}^n [(1+t^{a_i})(1-t^{b_i})(1+t^{c_i})]-\prod_{i=1}^n [(1-t^{a_i})(1+t^{b_i})(1+t^{c_i})]$,
\end{center}
and hence
\begin{equation} \label{e1}
\begin{split}
\displaystyle 0=-\prod_{i=1}^n [(1-t^{a_i})(1-t^{b_i})(1-t^{c_i})]+\prod_{i=1}^n [(1+t^{a_i})(1+t^{b_i})(1-t^{c_i})] \\
\displaystyle +\prod_{i=1}^n [(1+t^{a_i})(1-t^{b_i})(1+t^{c_i})]-\prod_{i=1}^n (1-t^{a_i})(1+t^{b_i})(1+t^{c_i})].
\end{split}
\end{equation}

Fix integers $0 \leq d_i,e_i,f_i \leq 1$ for $1 \leq i \leq n$. The coefficient of the term $\displaystyle t^{\sum_{i=1}^n (d_i a_i+e_i b_i+f_i c_i)}$ in the right hand side of Equation \eqref{e1} is
\begin{enumerate}[(i)]
\item 4 if $\displaystyle \sum_{i=1}^n d_i \equiv 1 \mod 2, \sum_{i=1}^n e_i \equiv 0 \mod 2, \sum_{i=1}^n f_i \equiv 0 \mod 2$
\item -4 if $\displaystyle \sum_{i=1}^n d_i \equiv 0 \mod 2, \sum_{i=1}^n e_i \equiv 1 \mod 2, \sum_{i=1}^n f_i \equiv 1 \mod 2$.
\item 0 otherwise.
\end{enumerate}
For instance, if $\sum_{i=1}^n d_i \equiv 1 \mod 2$, $\sum_{i=1}^n e_i \equiv 0 \mod 2$, and  $\sum_{i=1}^n f_i \equiv 1 \mod 2$, then the coefficient of $\displaystyle t^{\sum_{i=1}^n (d_i a_i+e_i b_i+f_i c_i)}$ in the first, second, third, fourth summand of the right hand side of Equation \eqref{e1} is $-1$, $-1$, $+1$, $+1$, respectively, and thus the coefficient of the term $\displaystyle t^{\sum_{i=1}^n (d_i a_i+e_i b_i+f_i c_i)}$ in the right hand side of Equation \eqref{e1} is 0.

Since the right hand side of Equation \eqref{e1} must be zero, it follows that $A=B$ as multisets.

The smallest element in $A$ is $a_1$ and the smallest element in $B$ is $b_1+c_1$. Hence $a_1=b_1+c_1$. 

Next, we consider the next smallest element in $A$ and that in $B$, i.e., the smallest element in $A \setminus \{a_1\}$ and that in $B \setminus \{b_1+c_1\}$. Possible candidates in $A \setminus \{a_1\}$ are $a_2$, $a_1+b_1+b_2$, and $a_1+c_1+c_2$, and those in $B \setminus \{b_1+c_1\}$ are $b_1+c_2$, $b_2+c_1$, and $a_1+a_2+b_1+c_1$. Because $a_1=b_1+c_1$, it follows that $a_1+b_1+b_2=2b_1+b_2+c_1>b_2+c_1$ and $a_1+c_1+c_2=b_1+2c_1+c_2>b_1+c_2$. Hence both $a_1+b_1+b_2$ and $a_1+c_1+c_2$ cannot be the smallest element in $A \setminus \{a_1\}$ and thus the smallest element in $A \setminus \{a_1\}$ is $a_2$. Next, $a_2$ is smaller than all elements in $B \setminus \{b_1+c_1\}$ of the form $\sum_{i=1}^n (d_i a_i+e_i b_i+f_i c_i)$ with $\sum_{i=1}^n d_i \geq 2$. Therefore, $a_2$ has to equal an element $\sum_{i=1}^n (d_i a_i+e_i b_i+f_i c_i)$ in $B \setminus \{b_1+c_1\}$ such that $\sum_{i=1}^n d_i=0$. Among elements $\sum_{i=1}^n (d_i a_i+e_i b_i+f_i c_i)$ in $B \setminus \{b_1+c_1\}$ with $\sum_{i=1}^n d_i=0$, the smallest element is $b_1+c_2$ or $b_2+c_1$. Therefore, $a_2=b_1+c_2$ or $a_2=b_2+c_1$. \end{proof}

\section{Classification: 8 dimensional manifolds with 3 fixed points} \label{s5}

In this section, we classify the fixed point data of an 8-dimensional compact oriented $S^1$-manifold with 3 fixed points, whose isotropy submanifolds are orientable.

\begin{proof}[\textbf{Proof of Theorem \ref{t12}}]
By Proposition \ref{p41}, the fixed point data of $M$ is
\begin{center}
$\Sigma_{p_1}=\{\pm, a_1, a_2, b_1, b_2\}$

$\Sigma_{p_2}=\{\pm, a_1, a_2, c_1, c_2\}$

$\Sigma_{p_3}=\{\mp, b_1, b_2, c_1, c_2\}$
\end{center}
for some positive integers $a_i$, $b_i$, and $c_i$, $1 \leq i \leq 2$, such that $a_1 \leq a_2$, $b_1 \leq b_2$, and $c_1 \leq c_2$. By Proposition \ref{p42}, $a_1=b_1+c_1$, and  $a_2=b_1+c_2$ or $a_2=b_2+c_1$. Without loss of generality, by permuting $p_1$ and $p_2$ (by permuting the role of $b_i$ and $c_i$) if necessary, we may assume that $a_2=b_1+c_2$.

By Theorem \ref{t21}, we push-forward the equivariant cohomology class $1$ to obtain
\begin{center}
$\displaystyle 0=\int_M 1=\pm \frac{1}{a_1 a_2 b_1 b_2} \pm \frac{1}{a_1 a_2 c_1 c_2} \mp \frac{1}{b_1 b_2 c_1 c_2}$
\end{center}
and hence
\begin{center}
$0=c_1 c_2 + b_1 b_2 - a_1 a_2=c_1 c_2 + b_1 b_2 - (b_1+c_1)(b_1+c_2)$.
\end{center}
It follows that $b_2=b_1+c_1+c_2$. Theorem \ref{t12} follows if we set $b_1=a$, $c_1=b$, and $c_2=c$.
\end{proof}

\begin{rem}
In the proof of Theorem \ref{t12}, the conclusion that $b_2=b_1+c_1+c_2$ also follows from Proposition \ref{p42}.
\end{rem}

We give an example of an $S^1$-action on $\mathbb{HP}^2$ that has 3 fixed points. By Theorem \ref{t12}, for any circle action on an 8-dimensional compact oriented manifold with 3 fixed points, if its isotropy submanifolds are orientable, the manifold has the same fixed point data as the fixed point data of this example, up to reversing the orientation of $\mathbb{HP}^2$.

\begin{exa} \label{e214}
Let $s=s_1+s_2 i + s_3 j + s_4 k$ denote an element of the quaternions $\mathbb{H}$. Under an embedding of $S^1 \subset \mathbb{C}$ into $\mathbb{H}$ by $g=s_1 + s_2 i \hookrightarrow s_1 + s_2 i + 0 j + 0 k$, let the circle act on the quaternionic projective space $\mathbb{HP}^2=(\mathbb{H}^3 \setminus \{(0,0,0)\})/((x_0,x_1,x_2) \sim (s x_0, s x_1, s x_2))$ by
\begin{center}
$g \cdot [x_0:x_1:x_2]=[g^d x_0:g^{e} x_1:g^{f} x_2]$
\end{center}
for all $g \in S^1$, where $d<e<f$ are either all positive integers, or all positive half integers. That is, $(d,e,f) \in \mathbb{N}^3$ or $(d,e,f) \in (\mathbb{N}+1/2)^3$. There are 3 fixed points $[1:0:0]$, $[0:1:0]$, and $[0:0:1]$. 

At $[1:0:0]$, using local coordinates $(\frac{x_1}{x_0},\frac{x_2}{x_0})$, the circle acts as
\begin{center}
$g \cdot (y_1,z_1,y_2,z_2)=(g^{e-d} y_1, g^{e+d} z_1, g^{f-d} y_2, g^{f+d} z_2)$,
\end{center}
for all $g \in S^1$, where $\frac{x_i}{x_0}=(y_i,z_i)$, $y_i,z_i \in \mathbb{C}$, $1 \leq i \leq 2$. Therefore, the fixed point data of $[1:0:0]$ is $\{+,e-d,e+d,f-d,f+d\}$. Similar computations show that the fixed point data of $\mathbb{HP}^2$ is
\begin{center}
$\{+, |e+d|, |e-d|, |f+d|, |f-d|\}$, 

$\{-, |d+e|, |d-e|, |f+e|, |f-e|\}$, 

$\{+, |d+f|, |d-f|, |e+f|, |e-f|\}$.
\end{center}
If we set $d=\frac{c-b}{2}$, $e=\frac{b+c}{2}$, and $f=a+\frac{b+c}{2}$, then this is equivalent to the fixed point data of Theorem \ref{t12} up to reversing orientation. \end{exa}

\section{Further preliminaries: 12 dimension and 3 fixed points} \label{s6}

In this section, to prove Theorem \ref{t11}, we investigate further properties that a 12-dimensional compact oriented $S^1$-manifold with 3 fixed points satisfies.

\begin{lem}\label{l61}
In Proposition \ref{p41}, if $n \geq 3$, then $\sum_{i=1}^n a_i^2=\sum_{i=1}^n b_i^2=\sum_{i=1}^n c_i^2$.
\end{lem}

\begin{proof}
This follows from Proposition \ref{p31} and Lemma \ref{l34}.
\end{proof}

In the setting of Proposition \ref{p41}, Lemma \ref{l61} implies that $a_3$ cannot be the biggest weight.

\begin{lem} \label{l62}
In Proposition \ref{p41}, suppose that $n=3$. Then $a_3$ cannot be the biggest weight.
\end{lem}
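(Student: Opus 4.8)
The plan is to argue by contradiction, combining the additive relations among the weights from Proposition \ref{p42} with the sum-of-squares identity from Lemma \ref{l61}. Suppose $a_3$ were the biggest weight, so in particular $a_3 \ge c_3$ and $a_3 \ge b_3$. First I would record the structural input: with $n=3$, Proposition \ref{p41} gives the three fixed point data in the usual ordering, and Proposition \ref{p42} gives $a_1 = b_1 + c_1$ together with either $a_2 = b_1 + c_2$ or $a_2 = b_2 + c_1$. Since the configuration is symmetric under interchanging $p_1$ and $p_2$ (equivalently, under swapping the roles of the $b_i$ and the $c_i$), and the conclusion is likewise symmetric, I may assume without loss of generality that $a_2 = b_1 + c_2$.

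The heart of the argument is to feed these two linear expressions for $a_1$ and $a_2$ into the quadratic identity $\sum_{i=1}^3 a_i^2 = \sum_{i=1}^3 c_i^2$ supplied by Lemma \ref{l61} (which applies since $n = 3 \ge 3$). Expanding $a_1^2 = (b_1+c_1)^2$ and $a_2^2 = (b_1+c_2)^2$ and solving for $a_3^2$, the $c_1^2$ and $c_2^2$ terms cancel and one is left with
\[
a_3^2 = c_3^2 - 2b_1^2 - 2b_1(c_1 + c_2).
\]
Since $b_1, c_1, c_2$ are positive integers, the subtracted quantity is strictly positive, so $a_3^2 < c_3^2$ and hence $a_3 < c_3$, contradicting $a_3 \ge c_3$. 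In the alternative case $a_2 = b_2 + c_1$, the identical computation carried out with the identity $\sum a_i^2 = \sum b_i^2$ yields $a_3 < b_3$ instead; either way $a_3$ fails to be the largest weight, proving the lemma.

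The computation itself is routine, so I do not expect a genuine obstacle. The one conceptual decision is which of the two sum-of-squares equalities in Lemma \ref{l61} to exploit: because $a_1$ and $a_2$ are each written as $b_1$ plus a $c$-weight, pairing the linear relations with the $a$--$c$ identity is exactly what forces the $c_1^2, c_2^2$ terms to cancel and pins $a_3^2$ strictly below $c_3^2$, whereas pairing with the $a$--$b$ identity would not cancel cleanly under the chosen normalization. The only point meriting a sentence of care is checking that the without-loss-of-generality reduction is legitimate, i.e. that swapping $p_1 \leftrightarrow p_2$ preserves both the hypotheses of Propositions \ref{p41} and \ref{p42} and the statement to be proved; this is immediate from the symmetry of the fixed point data in the $b_i$ and $c_i$.
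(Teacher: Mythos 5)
Your proof is correct and follows essentially the same route as the paper: both arguments combine the relations $a_1=b_1+c_1$ and $a_2=b_1+c_2$ (or $a_2=b_2+c_1$) from Proposition \ref{p42} with the sum-of-squares identity of Lemma \ref{l61} to force $a_3<c_3$ (or $a_3<b_3$). The paper phrases this more tersely as ``$a_1>c_1$ and $a_2>c_2$, hence $a_3^2<c_3^2$,'' but the content is identical to your explicit expansion.
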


\begin{proof}
By Proposition \ref{p42}, $a_1=b_1+c_1$, and $a_2=b_1+c_2$ or $a_2=b_2+c_1$. Hence $a_1$ is bigger than both $b_1$ and $c_1$, and $a_2$ bigger than $b_2$ or $c_2$. By Lemma \ref{l61}, $a_1^2+a_2^2+a_3^2=b_1^2+b_2^2+b_3^2=c_1^2+c_2^2+c_3^2$. Hence this lemma holds. \end{proof}

In \cite{J3}, the author classified an $S^1$-manifold whose weights at the fixed points are the same. We shall use this to obtain further results on the possibilities for $a_i$, $b_i$, $c_i$ in Proposition \ref{p41}, as in Lemma \ref{l64} below.

\begin{theorem} \label{t63} \cite{J3}
Let the circle act on a $2n$-dimensional compact oriented manifold $M$ with a discrete fixed point set. Suppose that the weights at each fixed point are $\{a_1,\cdots,a_n\}$ for each $a_i$ a positive integer. Then the number of fixed points $p$ with $\epsilon(p)=+1$ and that with $\epsilon(p)=-1$ are equal. In particular, the total number of fixed points is even and the signature of $M$ vanishes.
\end{theorem}

Theorem \ref{t63} applies to show a property on a component of $M^{\mathbb{Z}_{b_3}}$.

\begin{lem} \label{l64}
In Proposition \ref{p41} suppose that $n=3$ and $b_3$ is the biggest weight. Then a component of $M^{\mathbb{Z}_{b_3}}$ containing $p_1$ and $p_3$ does not contain $p_2$. In particular, none of $a_i$ and $c_i$ are equal to $b_3$.
\end{lem}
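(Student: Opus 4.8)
The plan is to realise the weight $b_3$ geometrically through the multigraph of Figure \ref{fig} and then apply Theorem \ref{t63} to a suitable fixed component of $M^{\mathbb{Z}_{b_3}}$. First I would observe that, since $b_3$ labels an edge joining $p_1$ and $p_3$ in the multigraph describing $M$, condition (4) of Definition \ref{d29} guarantees that $p_1$ and $p_3$ lie in a common connected component $F$ of $M^{\mathbb{Z}_{b_3}}$. By Lemma \ref{l24} this $F$ is a compact orientable submanifold (orientable automatically if $b_3$ is odd, and because it contains the $S^1$-fixed point $p_1$ if $b_3$ is even), and the circle action on $M$ restricts to a circle action on $F$ whose fixed set is exactly $F \cap M^{S^1} = F \cap \{p_1,p_2,p_3\}$.

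The key observation is that, because $b_3$ is assumed to be the biggest weight, the induced action on $F$ has all of its weights equal to $b_3$. Indeed, the tangent space to $F$ at a fixed point $p$ is the sum of those weight spaces $L_{p,i}$ with $b_3 \mid w_{p,i}$; since every weight satisfies $0 < w_{p,i} \le b_3$, divisibility by $b_3$ forces $w_{p,i}=b_3$. Thus at each fixed point of $F$ the weights form the multiset $\{b_3,\dots,b_3\}$ of size $\tfrac12 \dim F$, the same multiset at every fixed point since $F$ is connected.

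Now I would argue by contradiction. If $p_2$ also lay in $F$, then $F$ would be a compact oriented $S^1$-manifold with exactly three fixed points, each carrying the identical weight multiset $\{b_3,\dots,b_3\}$. Theorem \ref{t63} then forces the number of fixed points of $F$ of sign $+1$ to equal the number of sign $-1$, so $F$ would have an even number of fixed points, contradicting the count of three. Hence $p_2 \notin F$.

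Finally, for the assertion that no $a_i$ or $c_i$ equals $b_3$: if some $a_i=b_3$, then $a_i$ labels an edge between $p_1$ and $p_2$, so by Definition \ref{d29}(4) the points $p_1$ and $p_2$ would lie in the same component of $M^{\mathbb{Z}_{b_3}}$, which must be $F$ (the unique component containing $p_1$), forcing $p_2\in F$; similarly $c_i=b_3$ would place $p_2$ and $p_3$, and hence $p_2$, in $F$. Either outcome contradicts $p_2\notin F$. The only genuinely delicate point is the reduction in the second paragraph: one must invoke the maximality of $b_3$ to conclude that the restricted action on $F$ has all weights equal to $b_3$, which is precisely what lets Theorem \ref{t63} deliver the parity obstruction; the remaining steps are bookkeeping with the multigraph.
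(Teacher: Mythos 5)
Your proposal is correct and follows essentially the same route as the paper's own proof: locate the component $F$ of $M^{\mathbb{Z}_{b_3}}$ through the edge of label $b_3$, use maximality of $b_3$ to force every weight of the restricted action on $F$ to equal $b_3$, and invoke Theorem \ref{t63} (via Lemma \ref{l24} for orientability) to rule out an odd number of fixed points in $F$. The final deduction that no $a_i$ or $c_i$ equals $b_3$ is also the paper's argument, just spelled out in slightly more detail.
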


\begin{proof}
Since Figure \ref{fig} describes $M$ and there is an edge with label $b_3$ between $p_1$ and $p_3$, these two fixed points $p_1$ and $p_3$ are in the same component $F$ of $M^{\mathbb{Z}_{b_3}}$, which is compact. To prove this lemma, suppose on the contrary that $F$ also contains $p_2$. The circle acting on $M$ acts on $F$ as a restriction and this $S^1$-action on $F$ has fixed points $p_1$, $p_2$, and $p_3$. At a fixed point $p \in F^{S^1}$ of this $S^1$-action on $F$, every weight in the tangent space $T_pF$ to $F$ at $p$ is a multiple of $b_3$; since $b_3$ is the biggest weight every weight in $T_pF$ is $b_3$. By assumption, $F$ is orientable; choose an orientation of $F$. Applying Theorem \ref{t63} to this $S^1$-action on $F$, the $S^1$-action on $F$ must have an even number of fixed points, which is a contradiction. Therefore, $F$ does not contain $p_2$. Consequently, none of $a_i$ and $c_i$ are equal to $b_3$. \end{proof}

To prove Theorem \ref{t11}, we need a technical lemma on a relationship between the weights at fixed points lying in the same component of $M^{\mathbb{Z}_w}$, where $w$ is the biggest weight. For doing so, we need more definitions.

Let the circle act on a $2n$-dimensional compact oriented manifold $M$ with a discrete fixed point set. Let $w$ be a positive integer. Let $F$ be a connected component of $M^{\mathbb{Z}_w}$ such that $F \cap M^{S^1} \neq \emptyset$. Suppose that $F$ is orientable. Choose an orientation of $F$. Take an orientation of the normal bundle $NF$ of $F$ so that the induced orientation on $NF \oplus TF$ agrees with the orientation of $M$. Let $p\in F \cap M^{S^1}$ be an $S^1$-fixed point. By permuting $L_{p,i}$'s, we may let
\begin{center}
$T_pM=L_{p,1} \oplus \cdots \oplus L_{p,m} \oplus L_{p,m+1} \oplus \cdots \oplus L_{p,n}$,
\end{center}
where $N_pF=L_{p,1} \oplus \cdots \oplus L_{p,m}$ and $T_pF=L_{p,m+1} \oplus \cdots \oplus L_{p,n}$, and the circle acts on each $L_{p,i}$ with weight $w_{p,i}$. As in the introduction, we orient each $L_{p,i}$ so that $w_{p,i}$ is positive.

\begin{Definition} \label{d65} 
\begin{enumerate}[(1)]
\item $\epsilon_F(p)=+1$ if the orientation on $F$ agrees with the orientation on $L_{p,m+1}\oplus \cdots \oplus L_{p,n}$, and $\epsilon_F(p)=-1$ otherwise.
\item $\epsilon_N(p)=+1$ if the orientation on $NF$ agrees with the orientation on $L_{p,1}\oplus \cdots \oplus L_{p,m}$, and $\epsilon_N(p)=-1$ otherwise.
\end{enumerate} 
\end{Definition}

By definition, $\epsilon(p)=\epsilon_F(p) \cdot \epsilon_N(p)$.

\begin{lem} \label{l66}
In Proposition \ref{p41}, suppose that $n=3$ and $b_3$ is the biggest weight. Then there exist $i_1,i_2,i_3$ and $j_1,j_2,j_3$ such that $a_{i_1}+c_{j_1}=b_3$, $a_{i_2}+c_{j_2}=b_3$, $a_{i_3}=c_{j_3}$, and $c_1 \neq c_{j_3}$, where $\{i_1,i_2,i_3\}=\{j_1,j_2,j_3\}=\{1,2,3\}$.
\end{lem}

\begin{proof}
By Lemma \ref{l64}, a component $F$ of $M^{\mathbb{Z}_{b_3}}$ containing $p_1$ and $p_3$ does not contain $p_2$. Thus, if we restrict the circle action on $M$ to a circle action on $F$, this $S^1$-action on $F$ has exactly two fixed points $p_1$ and $p_3$. By assumption, $F$ is orientable; choose an orientation of $F$ and also take an orientation of the normal bundle $NF$ of $F$ so that the induced orientation on $NF \oplus TF$ is the orientation of $M$. If we apply Theorem \ref{t213} below to the $S^1$-action on $F$, it follows that $\epsilon_F(p_1)=-\epsilon_F(p_3)$. Since $\epsilon(p_1)=-\epsilon(p_3)$, with $\epsilon(p_1)=\epsilon_F(p_1) \cdot \epsilon_N(p_1)$ and $\epsilon(p_3)=\epsilon_F(p_3) \cdot \epsilon_N(p_3)$, this implies that $\epsilon_N(p_1)=\epsilon_N(p_3)$.

Let $k$ be the biggest $k$ such that $b_k < b_3$.
Let $a_1$, $a_2$, $a_3$, $b_1$, $\cdots$, $b_k$ ($c_1$, $c_2$, $c_3$, $b_1$, $\cdots$, $b_k$) be the weights of $L_{p_1,1}\oplus \cdots \oplus L_{p,1,m}$ ($L_{p_3,1}\oplus \cdots \oplus L_{p,3,m}$), respectively.

Let $-L_{p_3,i}$ denote $L_{p_3,i}$ with opposite orientation. Since $NF$ is an oriented $\mathbb{Z}_{b_3}$-bundle over $F$ and $F$ is connected, the $\mathbb{Z}_{b_3}$-representations of $N_{p_1}F$ and $N_{p_3}F$ are isomorphic, which are given by $C:=L_{p_1,1}\oplus \cdots \oplus L_{p,1,m}$ and $D:=L_{p_3,1}\oplus \cdots \oplus L_{p,3,m}$, respectively.

Suppose that $\phi_1$ is an orientation preserving $\mathbb{Z}_{b_3}$-isomorphism from $C$ to $D$. Suppose that $\phi_1$ takes $L_{p_1,4}$ to some $\pm L_{p_3,i}$, where $i \leq 3$. Then $\phi_1$ takes some $L_{p_1,j}$ to $\pm L_{p_3,4}$. Let $\phi_2$ be an orientation preserving isomorphism from $D$ to itself that intertwines $L_{p_3,i}$ and $\pm L_{p_3,4}$. Then $\phi_2 \circ \phi_1$ is an orientation preserving isomorphism from $C$ to $D$ that takes $L_{p_1,4}$ to $L_{p_3,4}$. Therefore, we may assume that an isomorphism from $C$ to $D$ takes $L_{p_1,i+3}$ to $L_{p_3,i+3}$, where $0 \leq i \leq k$. This implies that $L_{p_1,1}\oplus L_{p_1,2} \oplus L_{p_1,3}$ and $L_{p_3,1}\oplus L_{p_3,2} \oplus L_{p_3,3}$ are isomorphic as $\mathbb{Z}_{b_3}$-representations.

This means that one of the following holds:
\begin{enumerate}[(1)]
\item There is a bijection $\sigma:\{1,2,3\} \to \{1,2,3\}$ such that $L_{p_1,i}$ is isomorphic to $L_{p_3,\sigma(i)}$ for all $1 \leq i \leq 3$.
\item There is a bijection $\sigma:\{1,2,3\} \to \{1,2,3\}$ such that for exactly one $i$, say $i_3$, $L_{p_1,i_3}$ is isomorphic to $L_{p_3,\sigma(i_3)}$, and for $j$ such that $j \neq i_3$, $L_{p_1,j}$ is isomorphic to $-L_{p_3,\sigma(j)}$.
\end{enumerate}

Case (1) means that $a_i \equiv c_{\sigma(i)} \mod b_3$ for all $i$. Because $b_3$ is the biggest weight, this means that $a_i= c_{\sigma(i)}$ for all $i$. But this cannot hold since $c_1<a_1 \leq a_2 \leq a_3$ by Proposition \ref{p42}. Therefore, Case (2) holds. 

Now, $L_{p_1,i_3}$ being isomorphic to $L_{p_3,\sigma(i_3)}$ with given orientation means that $a_{i_3} \equiv c_{\sigma(i_3)} \mod b_3$. Therefore, $a_{i_3}=c_{\sigma(i_3)}$. Since $c_1<a_1\leq a_2 \leq a_3$ by Proposition \ref{p42}, $c_1$ cannot be $c_{\sigma(i_3)}$.

Let $j \neq i_3$. For this $j$, $L_{p_1,j}$ being isomorphic to $L_{p_3,\sigma(j)}$ with opposite orientation means that $a_j \equiv -c_{\sigma(j)} \mod b_3$. Since $a_i,c_i<b_3$ for all $i$, this implies that $a_{j}+c_{\sigma(j)}=b_3$. Setting $j_k=\sigma(i_k)$, this completes the proof. \end{proof}

If a circle action on a compact oriented manifold has exactly 2 fixed points, Kosniowski proved that the fixed point data is the same as that of a rotation of $S^{2n}$ \cite{K3}. This also easily follows from the index formula of Theorem \ref{t22}.

\begin{theorem} \label{t213} \cite{K3}
Let the circle act on a compact oriented manifold with two fixed points $p$ and $q$. Then the weights at $p$ and $q$ agree up to order and $\epsilon(p)=-\epsilon(q)$. \end{theorem}

\begin{proof}
By Lemma \ref{l23}, $\mathrm{sign}(M)=0$ and $\epsilon(p)=-\epsilon(q)$. By Theorem \ref{t22},
\begin{center}
$\displaystyle 0=\textrm{sign}(M) = \pm \prod_{i=1}^{n} \frac{1+t^{w_{p,i}}}{1-t^{w_{p,i}}} \mp \prod_{i=1}^{n} \frac{1+t^{w_{q,i}}}{1-t^{w_{q,i}}}$
\end{center}
for all indeterminate $t$. Therefore, $w_{p,i}$ and $w_{q,i}$ agree up to order. \end{proof}

\section{Non-existence: 12 dimension and 3 fixed points} \label{s7}

With all of the above, we are ready to prove our main result, Theorem \ref{t11}.

\begin{proof}[\textbf{Proof of Theorem \ref{t11}}]

Assume on the contrary that there exists a circle action on a 12-dimensional compact oriented manifold $M$ with exactly 3 fixed points, whose isotropy submanifolds are orientable. By Proposition \ref{p41}, Figure \ref{fig} describes $M$. In particular, the fixed point data of $M$ is
\begin{center}
$\Sigma_{p_1}=\{\pm, a_1,a_2,a_3,b_1,b_2,b_3\}$

$\Sigma_{p_2}=\{\pm, a_1,a_2,a_3,c_1,c_2,c_3\}$

$\Sigma_{p_3}=\{\mp, b_1,b_2,b_3,c_1,c_2,c_3\}$
\end{center}
for some positive integers $a_i$, $b_i$, and $c_i$, $1 \leq i \leq 3$, such that $a_1 \leq a_2 \leq a_3$, $b_1 \leq b_2 \leq b_3$, and $c_1 \leq c_2 \leq c_3$, where $p_1$, $p_2$, and $p_3$ are the three fixed points.

By Lemma \ref{l62}, $a_3$ cannot be the biggest weight. Without loss of generality, by permuting $p_1$ and $p_2$ (by permuting the role of $b_i$ and $c_i$) if necessary, we may assume that $b_3$ is the biggest weight.

By Lemma \ref{l66}, $a_{i_1}+c_{j_1}=b_3$, $a_{i_2}+c_{j_2}=b_3$, $a_{i_3}=c_{j_3}$, and $c_1 \neq c_{j_3}$, where $\{i_1,i_2,i_3\}=\{j_1,j_2,j_3\}=\{1,2,3\}$. By Lemma \ref{l61}, we have 
\begin{center}
$a_{i_1}^2+a_{i_2}^2+a_{i_3}^2=a_1^2+a_2^2+a_3^2=c_1^2+c_2^2+c_3^2=(b_3-a_{i_1})^2+(b_3-a_{i_2})^2+a_{i_3}^2$. 
\end{center}
Simplifying this, we have $2b_3(a_{i_1}+a_{i_2})=2b_3^2$ and hence $a_{i_1}+a_{i_2}=b_3$. Since $a_{i_1}+c_{j_1}=b_3$, this implies that $a_{i_2}=c_{j_1}$. Similarly, since $a_{i_2}+c_{j_2}=b_3$, $a_{i_1}+a_{i_2}=b_3$ implies that $a_{i_1}=c_{j_2}$. However, because $c_1 \neq c_{j_3}$, one of $c_{j_1}$ and $c_{j_2}$ must be $c_1$. This means that either $c_1=a_{i_1}$ or $c_1=a_{i_2}$, which leads to a contradiction because $c_1<a_1\leq a_2 \leq a_3$ by Proposition \ref{p42}. \end{proof}

\section{Conjectural bounds on the dimensions of manifolds with odd numbers of $S^1$ fixed points}\label{s8}

While for any even $k$ and for any $2n$ there is an example of such a manifold $M$, a situation could be different if $k$ is odd. If the number of fixed points is odd, there might be a relation between the dimension of a manifold and the number of fixed points. For the case of three fixed points, actions on $\mathbb{CP}^2$, $\mathbb{HP}^2$, and $\mathbb{OP}^2$ are examples in real dimensions 4, 8, and 16, and these could be the only possible dimensions for compact oriented $S^1$-manifolds with three fixed points. It is plausible that if the number of fixed points is odd, not all dimensions $4n$ are possible. Moreover, the dimension of a manifold might be bounded above by some number that depends on the number of fixed points, as the case of one fixed point supports this idea. Therefore, we suggest the following conjecture.

\begin{conj}\label{c11}
Let the circle act on a compact oriented manifold $M$ with $k$ fixed points, where $k$ is odd. Then $\dim M$ is bounded above by a function of $k$; more precisely, $\dim M \leq 8(k-1)$.
\end{conj}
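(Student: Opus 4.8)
The plan is to derive the bound from equivariant localization together with the rigidity of the equivariant signature, converting the geometric statement into a combinatorial constraint on the fixed-point weight data and then bounding the dimension directly from that constraint.

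First I would record the weight data. At each of the $k$ fixed points $p_1,\dots,p_k$ the tangent representation splits into $n$ real two-dimensional weight spaces, giving nonzero integers $w_{i,1},\dots,w_{i,n}$ (defined only up to sign, since $M$ is oriented but not necessarily almost complex) together with an orientation sign $\varepsilon_i\in\{\pm1\}$ recording how the product orientation of the weight spaces compares with the ambient orientation of $T_{p_i}M$. The Atiyah--Bott--Berline--Vergne formula applied to $1\in H^{*}_{S^1}(M)$ gives $\sum_{i=1}^{k}\varepsilon_i\big/\prod_{j=1}^{n}w_{i,j}=0$, and applied to the equivariant Pontryagin classes it gives a family of symmetric-function identities in the $w_{i,j}$. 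The Atiyah--Singer $G$-signature theorem, together with the rigidity of the signature, yields the stronger statement that
\begin{equation*}
\sum_{i=1}^{k}\varepsilon_i\prod_{j=1}^{n}\frac{x^{w_{i,j}}+1}{x^{w_{i,j}}-1}
\end{equation*}
is independent of $x$, equal to the integer $\mathrm{sign}(M)$; when $M$ is spin one may adjoin the rigidity of the twisted $\hat{A}$-genera (Bott--Taubes--Liu) to sharpen the system.

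The core of the argument is to show that, with only $k$ terms at our disposal, the cancellation forced by this constancy cannot occur unless $n\le 4(k-1)$. Here I would analyze the poles of the displayed sum at the roots of unity $x=\zeta$: each weight $w_{i,j}$ with $\zeta^{w_{i,j}}=1$ contributes a pole, and constancy requires the residues contributed by the $k$ fixed points to cancel in clusters, which is possible only if the weights repeat across fixed points in a tightly constrained pattern. Counting the number of independent residue conditions against the $kn$ available weights is what should produce a linear bound on $n$; the coefficient $8$ (rather than the $4$ of Kosniowski's conjecture in the almost-complex case) is expected to appear because the sign ambiguity in the oriented setting doubles the freedom in the weight system and thereby loosens the constraints. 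The examples $\mathbb{CP}^2$, $\mathbb{HP}^2$, $\mathbb{OP}^2$ in dimensions $4,8,16$ show that the bound $8(k-1)=16$ is attained at $k=3$, so any proof of the coefficient must be sharp rather than merely linear.

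The step I expect to be the main obstacle is exactly this residue count. The rigidity identities are necessary but likely not sufficient to pin down the weights, and as $k$ grows the admissible weight multigraphs become combinatorially rich, so a direct count may only yield a bound with the wrong constant. A promising but delicate alternative is an induction on $k$ via a geometric reduction --- passing to the fixed-point set of a cyclic subgroup $\mathbb{Z}_m\subset S^1$, or surgering out a pair of fixed points connected by an invariant two-sphere --- that decreases $k$ while preserving both orientability and the parity of the fixed-point number; controlling the dimension and the rigidity constraints through such a reduction, and handling the base case $k=3$ (for which Theorem~\ref{t11} removes dimension $12$ but the dimensions above $16$ remain to be excluded), is the crux and is the reason the statement is presented here only as a conjecture.
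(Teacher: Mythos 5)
The statement you are asked to prove is presented in the paper as Conjecture \ref{c11}; the paper contains no proof of it, and your proposal, by its own admission, does not contain one either. The central step --- converting the constancy of the Atiyah--Singer signature sum into the bound $n\le 4(k-1)$ by ``counting residue conditions against the $kn$ available weights'' --- is never carried out, and as described it cannot work, because the mechanism is parity-blind. For every even $k$ and every even dimension $2n$, equivariant connected sums of rotations of $S^{2n}$ give compact oriented $S^1$-manifolds with $k$ fixed points; these satisfy all of the localization identities and the full rigidity of the signature sum, so no count of ``identities versus unknowns'' extracted from those constraints alone can yield a dimension bound. Any successful argument must use the oddness of $k$ in an essential structural way (as the paper does for $k=3$ via Corollary \ref{c27} and the multigraph of Proposition \ref{p41}), and your proposal does not identify where oddness would enter the residue count. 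The proposed fallback --- induction on $k$ by passing to $M^{\mathbb{Z}_m}$ or by ``surgering out a pair of fixed points connected by an invariant two-sphere'' --- is likewise unsubstantiated: a component of $M^{\mathbb{Z}_m}$ may contain an even number of the fixed points (destroying the parity hypothesis), its dimension is not controlled from below in the way the induction needs, and the surgery move is not available in general, since the two endpoints of an edge of the multigraph have weights that agree only modulo the edge label, not on the nose.

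For calibration, note what the paper actually establishes and how: only the single case $(\dim M,k)=(12,3)$ is excluded (Theorem \ref{t11}), and even for $k=3$ all dimensions $4n\ge 20$ remain open, as you correctly observe. The paper's working versions of your ingredients are: ABBV applied to powers of the equivariant Pontryagin class with a Vandermonde argument (Lemmas \ref{l32}--\ref{l34}), forcing $\sum a_i^2=\sum b_i^2=\sum c_i^2$; the signature identity refined into an equality of multisets of weight-sums (Proposition \ref{p42}), which is a sharper and more usable form of your ``residues cancel in clusters''; and a delicate analysis of the $\mathbb{Z}_{b_3}$-isotropy representation on the normal bundle of a component of $M^{\mathbb{Z}_{b_3}}$ (Lemmas \ref{l64} and \ref{l66}), which has no counterpart in your sketch and is where the actual contradiction comes from. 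Your proposal is a reasonable research program pointing at the right toolkit, but it is not a proof, and the step you flag as ``the main obstacle'' is precisely the open content of the conjecture.
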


We specialize in dimension 12. In dimension 12, a standard linear $S^1$-action on $\mathbb{CP}^6$ provides an example with 7 fixed points. Therefore, if $k$ is a positive integer such that $k$ is even or $k \geq 7$, we can construct a 12-dimensional compact oriented $S^1$-manifold with $k$ fixed points from $S^{12}$ and $\mathbb{CP}^6$. On the other hand, there is no known example with 5 fixed points and thus it is a natural question to ask if such a manifold exists.

\begin{que}
Does there exist a circle action on a 12-dimensional compact oriented manifold with exactly 5 fixed points?
\end{que}

We review classification results for circle actions on other types of manifolds and relations between the dimension of a manifold and the number of fixed points. These will provide evidence for Conjecture \ref{c11}.

First, we discuss unitary manifolds. Suppose that the circle group acts on a compact unitary manifold $M$ with $k$ fixed points, $0 < k < \infty$. If $k=1$, $M$ is the fixed point. Suppose $k=2$. Then $S^{2n}$ admits an action with 2 fixed points; hence when $k=2$, examples exist in all even dimensions. Now, assume furthermore that $M$ does not bound a unitary manifold equivariantly. In this case, Kosniowski proved that either $M$ is the 2-sphere or $\dim M=6$ and the weights at the fixed points agree with those of an action on $S^6$ \cite{K2}; also see \cite{K1} for complex manifolds and \cite{PT} for symplectic manifolds. Musin reproved this in \cite{M2}. Based on this result, Kosniowski conjectured that for a compact unitary $S^1$-manifold with $k$ fixed points, $0 < k < \infty$, if $M$ does not bound a unitary manifold equivariantly, then $\dim M<4k$ \cite{K2}. Beyond the case of 2 fixed points, this conjecture remains open in its full generality.

Second, we discuss almost complex manifolds. Consider an $S^1$-action on a compact almost complex manifold $M$ with $k$ fixed points, $0 < k < \infty$. In this case, $M$ does not bound a unitary manifold equivariantly. This is because $M$ bounds a unitary manifold if and only if all the Chern numbers vanish, but the Chern number $c_n[M]$ is equal to the number $k$ of fixed points. Again, if $k=1$, $M$ is the fixed point itself. If $k=2$, by the above either $M$ is the 2-sphere or $\dim M=6$. Moreover, if $k=3$, the dimension of $M$ must be 4 and the weights at the fixed points agree with those of a standard linear action on $\mathbb{CP}^2$ \cite{J2} (also see \cite{J1}). Therefore, for almost complex manifolds, the Kosniowski's conjecture holds for $k \leq 3$, or it holds in dimensions up to 14.

\section*{Funding}

This work was supported by the National Research Foundation of Korea grant funded by the Korea government [2021R1C1C1004158 to D.J.].

\section*{Acknowledgments}

The author would like to thank the anonymous referees for their careful reading of this paper and for providing many fruitful comments, which have helped improve the exposition and quality of this paper.


\begin{thebibliography}{1}

\bibitem[AB]{AB}
M. Atiyah and R. Bott: \emph{The moment map and equivariant cohomology.} Topology \textbf{23} (1984) 1-28.

\bibitem[AS]{AS}
M. Atiyah and I. Singer: \emph{The index of elliptic operators: III.} Ann. Math. \textbf{87} (1968) 546-604.

\bibitem[DW]{DW}
H. Dong and J. Wang: \emph{Non-existence of circle actions on oriented manifolds with three fixed points except in dimensions 4, 8 and 16.} (2023), arXiv:2311.06779.

\bibitem[FS]{FS}
J. Fowler and Z. Su: \emph{Smooth manifolds with prescribed rational cohomology ring.} Geom. Dedicata \textbf{182} (2016), 215-232.

\bibitem[GS]{GS}
L. Godinho and S. Sabatini: \emph{New tools for classifying Hamiltonian circle actions with isolated fixed points.} Found. Comput. Math. \textbf{14} (2014) Issue 4, 791-860.

\bibitem[GW]{GW}
O. Goertsches and M. Wiemeler: \emph{Positively Curved GKM-Manifolds}, Int. Math. Res. Notices \textbf{22} (2015), 12015-12041.

\bibitem[HH]{HH}
H. Herrara and R. Herrera: \emph{$\hat{A}$-genus on non-spin manifolds with $S^1$ actions and the classification of positive quaternion-K\"ahler 12-manifolds.} J. Differ. Geom. \textbf{61} (2002) 341-364.

\bibitem[JT]{JT}
D. Jang and S. Tolman: \emph{Hamiltonian circle actions on eight dimensional manifolds with minimal fixed sets.} Transform. Groups. \textbf{22} (2017), 353-359.

\bibitem[J1]{J1}
D. Jang: \emph{Symplectic periodic flows with exactly three equilibrium points.} Ergod. Theor. Dyn. Syst. \textbf{34} (2014) 1930-1963.

\bibitem[J2]{J2}
D. Jang: \emph{Circle actions on almost complex manifolds with isolated fixed points.} J. Geom. Phys. \textbf{119} (2017) 187-192.

\bibitem[J3]{J3}
D. Jang: \emph{Circle actions on oriented manifolds with discrete fixed point sets and classification in dimension 4.} J. Geom. Phys. \textbf{133} (2018) 181-194.

\bibitem[J4]{J4}
D. Jang: \emph{Circle actions on oriented manifolds with few fixed points}. East Asian Math. J. \textbf{36} (2020) 593-604.


\bibitem[J5]{J6}
D. Jang: \emph{Almost complex torus manifolds - graphs and Hirzebruch genera.}. Int. Math. Res. Notices, Volume 2023, Issue 17, August 2023, Pages 14594-14609.

\bibitem[J6]{J5}
D. Jang: \emph{Circle actions on unitary manifolds with discrete fixed point sets}. Indiana U. Math. J., \textbf{72} No. 6 (2023), 2431-2453.

\bibitem[Kob]{Kob}
S. Kobayashi: \emph{Fixed points of isometries}. Nagoya Math. J. \textbf{13} (1958) 63-68.

\bibitem[Kos1]{K1}
C. Kosniowski: \emph{Holomorphic vector fields with simple isolated zeros.} Math. Ann. \textbf{208} (1974) 171-173.

\bibitem[Kos2]{K2}
C. Kosniowski: \emph{Some formulae and conjectures associated with circle actions}. Topology
Symposium, Siegen 1979 (Proc. Sympos., Univ. Siegen, Siegen, 1979), 331-339, Lecture
Notes in Math., 788, Springer, Berlin, 1980.

\bibitem[Kos3]{K3}
C. Kosniowski: \emph{Fixed points and group actions}. In: Madsen I.H., Oliver R.A. (eds) Algebraic Topology Aarhus 1982. Lecture Notes in Mathematics, vol 1051. Springer, Berlin, Heidelberg.

\bibitem[Ku]{Ku}
A. Kustarev: \emph{Universality of actions on $\mathbb{HP}^2$}. arXiv:1401.4731 (2014).

\bibitem[MS]{MS}
J. Milnor and J. Stasheff: \emph{Characteristic classes.} Princeton University Press, Princeton, N.J.; University of Tokyo Press, Tokyo, 1974. Annals of Mathematics Studies, No. 76.

\bibitem[M]{M2}
O. Musin: \emph{Circle actions with two fixed points}. Math. Notes \textbf{100} (2016) 636-638.

\bibitem[PT]{PT}
A. Pelayo and S. Tolman: \emph{Fixed points of symplectic periodic flows.} Ergod. Theor. Dyn. Syst. \textbf{31} (2011), 1237-1247.

\bibitem[S]{S}
Z. Su: \emph{Rational analogs of projective planes.} Algebr. Geom. Topol. \textbf{14} (2014), 421-438.

\bibitem[T]{T}
S. Tolman: \emph{On a symplectic generalization of Petrie's conjecture.} Trans. Amer. Math. Soc. \textbf{362} (2010), no.8, 3963-3996.

\bibitem[W]{W}
M. Wiemeler: \emph{On circle actions with exactly three fixed points.} (2023), arXiv:2303.15396 

\end{thebibliography}
\end{document}